\documentclass{amsart}

\usepackage{amssymb}
\usepackage{bm}
\usepackage{graphicx}
\usepackage[centertags]{amsmath}
\usepackage{amsfonts}
\usepackage{amsthm}


\newtheorem{theorem}{Theorem}
\newtheorem*{mtheo}{Theorem}

\newtheorem{claim}{Claim}
\newtheorem{lemma}{Lemma}

\newtheorem{definition}{Definition}
\theoremstyle{definition}

\linespread{1.15}

         \begin{document}
 
\title[ An infinite self dual Ramsey theorem]{An infinite self dual Ramsey theorem}

\author{ Dimitris VLITAS}

\address{Department of Mathematics, University of Toronto, 40 St. George Street, Toronto, Ontario, Canada M5S 2E4}
\email{dimitrios.vlitas@utoronto.ca}

\thanks{The research leading to these results has received funding from the [European Community's] Seventh Framework Programme [FP7/2007-2013] under grant agreement n 238381}

\begin{abstract} In a recent paper \cite{So} S. Solecki proved a finite self-dual Ramsey theorem that extends simultaneously the classical finite Ramsey theorem \cite{Ra} and the Graham--Rothschild theorem \cite{Gr--Ro}. In this paper we prove the corresponding infinite dimensional version of the self-dual theorem. As a consequence, we extend the classical infinite Ramsey theorem \cite{Ra} and the Carlson--Simpson theorem \cite{Ca--Si}.
\end{abstract}
\maketitle

\section{Introduction}

 Recall that the classical infinite version of Ramsey's theorem \cite{Ra} states that given any finite coloring of the set of all $K$ element subsets of $\omega$ there exists an infinite subset $A$ of $\omega$ such that the restriction of the coloring on all subsets of $A$ of cardinality $K$ is constant.
 
 The dual form of Ramsey's theorem, the Carlson--Simpson theorem \cite{Ca--Si}, states that given any finite Borel coloring of the set of all $K$-partitions of $\omega$ into $K$ many classes, there exists a partition $r$ of $\omega$ into $\omega$ many classes such that the set of all $K$ partitions of $\omega$ which are coarser than $r$ is monochromatic.
 
 As it is well known the above results have finite versions as well, namely the finite Ramsey theorem and the Graham--Rothschild theorem \cite{Gr--Ro}. Recently S. Solecki proved \cite{So} a self-dual theorem which extends simultaneously the finite version of the Ramsey theorem and the Graham--Rothschild theorem. 
 
 The main goal of the present paper is to obtain an infinite self-dual Ramsey theorem which also extends simultaneously the infinite Ramsey theorem \cite{Ra} and the Carlson-Simpson theorem \cite{Ca--Si}.
 
 We explicit state our contribution in Section $2$. The rest of this introduction is devoted to the presentation of some background material. We also include a brief review of related work.

 \subsection{Basic definitions}
 
      We follow the terminology introduced in \cite{So}. Let $K\leq L\leq \omega$. We view a natural number $K$ as a linear order $(K, \leq_K)$ where $\leq_K$ is the usual ordering on the set $\{ 0, \dots, K-1 \}$. Similarly in the case $K=\omega$.
 By a rigid surjection $t:L\to K$ we mean a surjection with the additional property that images of initial segments of $L$ are also initial segments of $K$. We denote the image of a rigid surjection $t$, by $im(t)$ and its domain by $dom(t)$. Now let $t:L\to K$ and $i:K\to L$ be two maps. We say that the pair $(t,i)$ is a connection if for all $x\in K, y\in L$:\\  \begin{center} $t(i(x))=x$ and if $y\leq i(x)$ then $t(y)\leq x$.\end{center} 
 Note that if $(t,i)$ is a connection, then $t$ is a rigid surjection and $i$ is an increasing injection.

   With this terminology the classical Ramsey Theorem can be stated as follows. Let $l,K$ be natural numbers. For any $l$-coloring of all increasing injections $j:K\to \omega$ there exists an increasing injection $j_0:\omega \to \omega$ such that the set $\{\, j_0\circ j: j:K\to \omega \, \}$ is monochromatic. \\
  
  Similarly, the Carlson--Simpson Theorem can be stated as follows. Let $l$ a natural number. For any Borel $l$-coloring of all rigid surjections $s:\omega \to K$, there exists a rigid surjection $s_0:\omega \to \omega$ such that the set $\{\, s\circ s_0 : s:\omega \to K \, \}$ is monochromatic.\\

 \subsection{ The space of connections}

    Given a finite, possibly empty, alphabet $A=\{\, \alpha_0,\dots, \alpha_{|A|-1}\,\}$, written in an increasing order, and given $K\le L\le \omega$ we define\\ 
    
    $F^A_{L,K}=\{ \, (r,c)$: $ r:A\cup L  \to A \cup K$, $ c: K\to L, r\upharpoonright A=id_A$ and $c$ is an increasing injection such that $ r(c(x))=x, y\leq c(x)\Rightarrow r(y)\leq x$, for all $x\in K, y\in L\, \}$.\\  
   
   Here we view $A\cup L$, and $A\cup K$ respectively, as initial segments of $\omega$, where every element of $A$ preseats every element of $L$, and every element of $A$ preseats every element of $K$ respectively. Notice that $A$ is not in the domain of $c$. We note that if $A$ is empty, we will suppress the superscript $A$ in $F_{K,L}^A$.

   Let $(r,c)\in F_{L,K}^A$, where $K\leq L\leq \omega$. The element $r$ is essentially an equivalence relation on $ A\cup L$ into $A \cup K$ many equivalent classes, where each equivalent class is permitted to contain at most one element of the finite alphabet $A$. On the other hand the map $c$ is a choice function for the set of all $\bf{free}$ classes of $r$ (here by a free class we mean an $r$-equivalent class not containing an element of $A$). The choice function $c$ selects $c(k)$ such that $$\min r^{-1}(\{k\})\leq c(k) < \min r^{-1}(\{k+1\}).$$ 
  For every $n\in K$, let $X_n=r^{-1}(\{n\})$ and set $E_n=\min X_n$. We define $$(r\upharpoonright E_{n},c\upharpoonright n)=(r,c)[n] \in F_{E_{n},n}^A$$ 
 
 Note that $r\upharpoonright E_n$ includes $A$ in its domain (this follows from our convention that on $A\cup L$ every element of $A$ preseats every element of $L$).
 
   \begin{definition} 
   
   Let $K \leq L<\omega$ and $ M\leq N\leq \omega$. We say that $(t,i)\in F^A_{L,K}$ is an initial segment of $(r,c)\in F^A_{N,M}$, if $(t,i)=(r,c)[K]$. We denote the fact that $(t,i)$ is an initial segment of $(r,c)$ by $(t,i)\preceq (r,c)$.
  
   \end{definition}
   
   For $(t,i), (r,c)$ as above, we extend the above definition to the case where $(t,i)=(r\upharpoonright L, i\upharpoonright K)$. We denote this case by $$(t,i)\sqsubseteq (r,c).$$
   
   Observe that $(t,i)\sqsubseteq (r,c)$ does not necessarily implies that $(t,i)\preceq (r,c)$.

   \subsection{Composition, length and reduct}
   
   We are about to introduce three basic operations on the space of connections. First given $(s,j) \in F^A_{L,K}$ and $(r,c) \in F^A_{M,L}$, where $K\leq L\leq M\leq \omega$, we define the $\mathit{composition}$ by $$(s,j)\cdot(r,c)=(s\circ r, c\circ j)\in F_{M, K}^A.$$ Notice that the order of composition in the two coordinates is not the same. 
   
Note also that the multiplication, the way defined above, is nothing more than a way to put equivalence classes together and accordingly adjusting the choice functions.\\
      We proceed with the following definition.
  \begin{definition} Let $K\leq L\leq \omega$ and $(t,i)\in F_{L,K}^A$. We define its length of $(t,i)$ to be the domain of $t$. We shall denote it by $|(t,i)|$.
     \end{definition}
 We remark that in Section $3$ where we will deal with rigid surjections, we will still use the above terminology. Specifically we will talk of the length of a rigid surjection $t$ and we will also denote it by $|t|$.   
  
  We need to introduce one more definition.
   
   \begin{definition}
     Let $(r_0,c_0)\in F_{M,K}^A$, $(r_1,c_1)\in F_{M,L}^A$ with $K\leq L \leq M\leq \omega$. We say that  $(r_0,c_0)$ is a $\mathit{reduct}$ of $(r_1,c_1)$, denoted by $(r_0,c_0)\leq (r_1,c_1)$, if both have the same length and there exists $(r,c)\in F_{L,K}^A$ such that $$(r_0,c_0)=(r,c)\cdot (r_1,c_1).$$
  
         \end{definition}
 We close this subsection by stating the finite self-dual theorem due to Solecki. \cite{So}.
 
 \begin{theorem}
For any $K,l$ and $M$ positive integers, there exists $N\in \omega$ such that for any finite coloring $c:F_{N,K}\to l$ there exists a connection $(s_0,j_0)\in F_{N,M}$ such that the set $$F_{M,K}\cdot (s_0,j_0)=\{\, (s,j)\cdot (s_0,j_0): (s,j)\in F_{N,M}\,\}$$ is monochromatic.
  \end{theorem}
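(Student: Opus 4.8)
The plan is to prove a stronger statement by carrying a nonempty auxiliary alphabet along, which is exactly why the spaces $F^A_{N,K}$ were set up above: I would show that for every finite alphabet $A$, every integer $K\ge 0$, and all positive integers $l,M$ there is $N$ such that every $l$-coloring of $F^A_{N,K}$ admits a connection $(s_0,j_0)\in F^A_{N,M}$ with $F^A_{M,K}\cdot(s_0,j_0)$ monochromatic. The role of $A$ is to record classes that have already been frozen during the argument, so that peeling off one class of a connection decreases $K$ at the cost of enlarging $A$ by a single symbol. The theorem as stated is then the instance $A=\varnothing$.

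First I would treat the base case $K=0$. A connection in $F^A_{N,0}$ is just a rigid surjection $A\cup N\to A$ fixing $A$ pointwise (the choice function is empty), and composing on the right with an element of $F^A_{N,M}$ coarsens it. Thus $K=0$ is precisely a statement about finite colorings of rigid surjections onto the alphabet, which I would obtain directly from the Hales--Jewett theorem, equivalently from the Graham--Rothschild theorem on rigid surjections. This is the combinatorial engine that powers the whole induction, and the alphabet formulation is what lets me feed partial information into that engine at each later step.

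For the inductive step $K\to K+1$ I would peel off the first class. Given $(r,c)\in F^A_{N,K+1}$, its initial segment $(r,c)[1]=(r\upharpoonright E_1,c\upharpoonright 1)$ isolates the first free class $X_0=r^{-1}(\{0\})$ together with its chosen representative $c(0)$, while everything above $E_1$ is a connection with $K$ free classes over the enlarged alphabet $A'=A\cup\{\ast\}$, where $\ast$ names the now-frozen class $X_0$. The strategy is to apply the induction hypothesis for $(K,l,M)$ over $A'$ to stabilize the color on the upper part, to dispose of the remaining freedom in the first class and its choice point by the $K=0$ engine, and finally to glue the two reductions into a single connection $(s_0,j_0)\in F^A_{N,M}$ via composition; a Fubini-type (product-Ramsey) bookkeeping over the finitely many possibilities for $(r,c)[1]$ keeps the number of colors finite throughout.

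The main obstacle, and the place where the self-dual character really bites, is the compatibility constraint $\min r^{-1}(\{k\})\le c(k)<\min r^{-1}(\{k+1\})$ that couples the choice function $c$ to the partition $r$. Treating the rigid-surjection coordinate and the injection coordinate by two independent applications of Graham--Rothschild and of the classical Ramsey theorem does not suffice: a set that is monochromatic after coarsening the partition can fail to remain so once the representatives are relocated, and the output must be a single genuine connection rather than an unrelated pair. I therefore expect the technical heart to be a combined, self-dual one-step lemma that simultaneously coarsens $r$ and moves $c$ within its allowed interval, applied and amalgamated level by level through the decomposition $(r,c)\mapsto (r,c)[n]$, so that the resulting $(s_0,j_0)$ respects the connection constraints at every level.
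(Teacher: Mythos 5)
A preliminary but important point: the paper does not prove this statement at all. Theorem 1 is Solecki's finite self-dual Ramsey theorem, and the paper explicitly imports it from \cite{So} as known input to the infinite argument, so there is no in-paper proof to compare against. Your outline does, however, closely track the strategy the paper uses for its \emph{infinite} analogue (Theorem 3 via Lemmas 1--3): pass to the alphabet spaces $F^A_{N,K}$, take $K=0$ as a base case of Hales--Jewett/Graham--Rothschild type (there the choice function is empty and connections degenerate to rigid surjections onto $A$, so this reduction is correct), and run an induction $K\to K+1$ in which the first free class is frozen into a new alphabet symbol, with a Fubini-style amalgamation over the finitely many possibilities for the first block. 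That skeleton is the right one, and it is exactly the shape of Solecki's argument and of this paper's Lemma 3.

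The genuine gap is that the entire difficulty of the theorem is concentrated in the step you only name: the ``combined, self-dual one-step lemma that simultaneously coarsens $r$ and moves $c$ within its allowed interval.'' You correctly diagnose why separate applications of Graham--Rothschild to $r$ and of Ramsey to $c$ cannot be glued (the constraint $\min r^{-1}(\{k\})\le c(k)<\min r^{-1}(\{k+1\})$ couples the two coordinates, and the output must be one connection), but you then defer precisely this point to an ``expected'' lemma, so nothing in the proposal actually produces $(s_0,j_0)$. For comparison, the paper's infinite version of that one-step lemma (Lemma 2) is proved by coding the candidate first blocks as words over $A$ and invoking the Hales--Jewett theorem for \emph{left-variable} words: the left-variable condition forces the minimum of the first free class of any coarsening to land exactly on a block boundary, which is what makes the coded word decompose as an element of $w_0^\frown[X]_A$ and keeps the result a genuine connection. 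A finite, quantitative analogue of that pigeonhole, together with the level-by-level amalgamation, is exactly what Solecki's framework in \cite{So} supplies and what your sketch is missing. As written, the proposal is a plausible plan whose load-bearing lemma is left as a conjecture, so it does not yet constitute a proof.
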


          \subsection{Topology of the space of connections}
         
Let $K<L< \omega$, the space $F_{\omega,\omega}^A$, becomes a topological space, with basic open sets: $$ [ (t,i), (r,c)]^A_{\omega}=\{ (r',c')\in F_{\omega,\omega}^A: (t,i)\sqsubseteq (r',c')\text{ and }(r',c')\leq (r,c) \, \},$$

    where $(t,i)\in F^A_{L,K}$ and $(r,c)\in F_{\omega,\omega}^A$. If $(t,i)=(id_A, \emptyset)$, then we will write $$[(r,c)]^A_{\omega}$$
    Observe that $ [ (t,i), (r,c)]^A_{\omega}=\emptyset$ if and only if there is not $(r',c')\leq (r,c)$ so that $(t,i)\sqsubseteq (r',c')$.
    In the case of $A=\emptyset$, we will omit the superscript $A$.
    On what follows for notational simplicity we will omit the subscript $\omega$ from our definition of basic open sets.

  \begin{center}
   \section{Main theorem}
   \end{center}
   
   The main theorem of this paper is the following.
   
   \begin{theorem}
Given a finite Suslin measurable coloring $g:F_{\omega,\omega} \to l$, there exists a $(r,c)\in F_{\omega,\omega}$ such that $[(r,c)]$ is monochromatic.
\end{theorem}

Recall that a map $f:X \to Y$ between two topological spaces is Suslin measurable, if the preimage $f^{-1}(U)$ of every open subset $U$ of $Y$ belong to the minimal $\sigma-$field of subsets of $X$ that contains its closed sets and it is closed under the Suslin operation \cite{Ke}. At this point we introduce a new topology on the space $F_{\omega,\omega}^A$. Given $(t,i) \in F_{L,K}^A$, $K\leq L<\omega$, let $$[(t,i)]=\{ (r,c)\in F^A_{\omega,\omega}: (t,i)\sqsubseteq (r,c)[K] \}.$$
Observe that this topology is weaker than the one introduced in the subsection $1.4$ above.
By $(F_{\omega,\omega}^A,\tau_2)$, we denote the topological space of $F_{\omega,\omega}^A$ with the weaker topology introduced just above. Next we identify an element $(r,c)$ of $F_{\omega, \omega}^A$ with the pair $((X_{n'})_{n' \in A\cup \omega}, (x_n)_{n\in \omega})$, where $X_{n'}=r^{-1}(\{n'\})$, $n'\in A\cup  \omega$, and $c(n)=x_n$, for $n\in \omega$. Observe that $\cup_{n'\in A\cup \omega} X_n'=A \cup \omega$ and $\min X_n< \min X_{n+1}$ for all $n\in \omega$. Let $\mathcal{C}_A$ be the space of all such pairs. We say that $((X_{n'})_{n'\in A\cup  \omega}, (x_n)_{n\in \omega})\leq ((Y_{n'})_{n'\in A\cup  \omega}, (y_n)_{n\in \omega})$, if and only if $(X_{n'})_{n'\in A\cup \omega}$ is a coarser partition than the $(Y_{n'})_{n'\in A\cup  \omega}$ and $(x_n)_{n\in \omega}$ is the subsequence of $(y_n)_{n\in \omega}$. We turn $\mathcal{C}_A$ to a topological space with basic open sets of the form:\\ 
$[((X_{n'}\upharpoonright E_k)_{n'\in A\cup k}, (x_n)_{ n \in k})]= \{(Y_{n'})_{n'\in A\cup \omega},(y_n)_{n\in \omega}, \text{ so that }((Y_{n'}\upharpoonright E_k)_{n'\in A\cup k}, \\(y_n)_{n\in k})=((X_{n'}\upharpoonright E_k)_{n'\in A\cup k}, (x_n)_{n\in k})\},$\\
where $E_k=\min X_k, k \in \omega$ as introduced in the first section. If $(r,c)$ gives rise to $((X_{n'})_{n' \in A\cup \omega}, (x_n)_{n\in \omega})$ and $(t,i)=((X_{n'}\upharpoonright E_k)_{n'\in A\cup k}, (x_n)_{n\in k})$, $c(n)=x_n, n<k$,
 then $[(t,i), (r,c)]=[((X_{n'}\upharpoonright E_k)_{n'\in A\cup k}, (x_n)_{n\in k}),((X_{n'})_{n'\in A\cup  \omega},(x_n)_{n\in \omega})]$ and $[(t,i)]=[((X_{n'}\upharpoonright E_k)_{n'\in A\cup k}, (x_n)_{n\in k})]$.

 Notice that if $(r,c)\neq (r',c')$ and $((X_{n'})_{n'\in A\cup \omega}, (x_n)_{n\in \omega})$, $((X'_{n'})_{n'\in A\cup \omega}, (x'_n)_{n\in \omega})$ the corresponding pairs under the above identification, then $((X_{n'})_{n'\in A\cup \omega}, (x_n)_{n\in \omega})\\\neq ((X'_{n'})_{n'\in A\cup \omega}, (x'_n)_{n\in \omega})$. As a consequence the mapping $T:F_{\omega, \omega}^A \to \mathcal{C}_A$, that corresponds to the above identification, is one-to-one and onto.
 
Let $\mathcal{P}_A$ be the space of all rigid surjections $ r:A\cup \omega\to A\cup \omega$, where $r\upharpoonright A=id_A$, identified with elements of the form $(X_{n'})_{n'\in A\cup \omega}$, where $X_{n'}=r^{-1}\{ n'\}$, $n'\in A\cup \omega$, $\cup_{n'\in A\cup \omega} X_{n'}=A \cup \omega$ and $\min X_n<\min X_{n+1}$. Once more, $(X_{n'})_{n'\in A\cup \omega}\leq (Y_{n'})_{n'\in A\cup \omega}$ if and only if $(X_{n'})_{n'\in A\cup \omega}$ is a coarser partition than $(Y_{n'})_{n'\in A\cup \omega}$. We turn $\mathcal{P}_A$ into a topological space by introducing a topology with basic open set is of the form: \\$[(X_{n'}\upharpoonright E_k)_{n' \in A\cup k})]=\{(Y_{n'})_{n'\in A\cup \omega}, \text{ so that } (Y_{n'}\upharpoonright E_k)_{n'\in A \cup k}=(X_{n'}\upharpoonright E_k)_{n'\in A\cup k}\},$\\

where $E_k=\min X_k, k \in \omega$. First we will prove the following theorem.

\begin{theorem}
Consider the topological space $(F_{\omega, \omega}^A,\tau_2)$.
Given a finite Borel measurable coloring $g:F_{\omega,\omega}^A \to l$, there exists a $(r,c)\in F_{\omega,\omega}^A$ such that $[(r,c)]$ is monochromatic.
\end{theorem}

Observe that the above theorem, is a special case of Theorem $2$, with $A=\emptyset$, for the weaker topology $\tau_2$.
We are going to derive Theorem $3$, from the Carlson--Simpson theorem. The Carlson--Simpson theorem in the above context is the following (Corollary 4.12 in \cite{Ca--Si}).

 \begin{mtheo}(Carlson--Simpson)
Given a finite Borel measurable coloring $g':\mathcal{P}_A \to l$, there exists a $(X_{n'})_{n'\in A\cup  \omega}\in \mathcal{P}_A$ such that $$\{(Y_{n'})_{n'\in A\cup \omega}: (Y_{n'})_{n'\in A\cup  \omega} \leq (X_{n'})_{n'\in A\cup  \omega}\}$$ is monochromatic.
\end{mtheo}

Let $Q: \mathcal{P}_A \to \mathcal{C}_A$ be the mapping defined by $$Q((X_{n'})_{n'\in A\cup \omega})= ((X_{n'})_{n'\in A} \cup (X_{2n'}\cup X_{2n'+1})_{n'\in \omega}, \\(\min X_{2n+1})_{n\in \omega}).$$ Observe that $Q$ is onto. We claim that: \\$Q([(X_{n'}\upharpoonright E_{2k+2})_{n'\in A\cup 2k+2}]) \supset 
[((X_{n'})_{n'\in A}\cup (X_{2n'}\cup X_{2n'+1}\upharpoonright E_{2k+2})_{n'\in k+1}, \\(\min X_{2n'+1})_{n'\in k+1}))]$.\\ Pick $((Y_{n'})_{n'\in A\cup \omega}, (y_n)_{n\in \omega})\in [((X_{n'})_{n'\in A}\cup (X_{2n'}\cup X_{2n'+1}\upharpoonright E_{2k+2})_{n'\in k+1}, \\(\min X_{2n'+1})_{n'\in k+1}))]$. Let $(X'_{n'})_{n'\in A\cup \omega}$ be defined as follows: $X'_{n'}=X_{n'}$ for all $n' \leq A\cup 2k+1$, $X'_{2k+2}=\cup_{n'\in [m,m')} X_{n'}$, where $m$ is the least such a number so that $y_{k}<\min X_{m} $ and $m'$ is so that $\min X_{m'}=y_{k+1}$. Then $X'_{2k+3}=X_{m'}$, where $\min X_{m'}=y_{k+1}$. Next set $X'_{2k+4}=\cup_{n\in [m'+1,m'')}X_n$, where $\min X_{m''}=y_{k+2}$ and $X'_{2k+5}=X_{m''}$.
 Proceed in this manner. Then $Q((X'_{n'})_{n'\in A\cup \omega})=((Y_{n'})_{n'\in A\cup \omega},(y_n)_{n\in \omega})$.
 
 Next we claim that $Q$ is a Borel measurable mapping. To see this let $E_{k}=\min X_{k}$, $k\in \omega$, and consider the basic open set $[((X_{n'}\upharpoonright E_{k})_{n'\in A\cup k}, (x_n)_{n\in k})]$ in $\mathcal{C}_A$. Then $Q^{-1}([((X_{n'}\upharpoonright E_{k})_{n'\in A\cup k}, (x_n)_{n\in k})])$ is Borel measurable. To see this notice that it can be written as the finite union of basic open sets of $\mathcal{P}_A$, of the form $[(Y_{m'}\upharpoonright E_k)_{m'\in A\cup 2k}]$. Where a possible such a $((Y_{m'}\upharpoonright E_k)_{m'\in A\cup 2k})$ is defined as follows: $Y_0\upharpoonright x_0=X_0\upharpoonright x_0$, $Y_1=(X_0\upharpoonright  E_1)\setminus (X_0\upharpoonright x_0)$. In general $Y_{m'}=(X_{m'-1}\upharpoonright E_{m'})\setminus (X_{m'-1}\upharpoonright x_{m'-1})$, for $0<m'<2k$, odd and $Y_{m'}=X_{m'-1}\upharpoonright x_{m'-1}$ for $0<m'<2K$ even. Also $Y_{m'}=X_{m'}$ for $m'\in A$.
 Any $(Y_{m'}\upharpoonright E_k)_{m'\in A\cup 2k}$ so that $((Y_{m'}\upharpoonright E_k)_{m'\in A}\cup (Y_{2m'}\cup Y_{2m'+1}\upharpoonright E_k)_{0\leq m' < 2k }, (\min Y_{2m'+1})_{m'\in 2k})=((X_{n'}\upharpoonright E_{k})_{n'\in A\cup k}, (x_n)_{n\in k})$ will do. Observe that there are only finitely many possible candidates for $(Y_{m'}\upharpoonright E_k)_{m'\in A\cup 2k}$. Therefore the inverse image of a basic open set of $\mathcal{C}_A$ is the finite union of basic open sets of $\mathcal{P}_A$.

Now we proceed to the proof of Theorem $3$.

\begin{proof} Let $F_{\omega, \omega}^A= C_0\cup \dots \cup C_{l-1}$ be a Borel measurable coloring with respect the topology $(F_{\omega, \omega}^A,\tau_2)$. Consider the corresponding coloring of $\mathcal{P}_A$ under $Q$, $\mathcal{P}_A=C'_0\cup \dots \cup C'_{l-1}$, where $(X_{n'})_{n'\in A\cup \omega}\in C'_i$ if and only if $Q((X_{n'})_{n'\in A\cup \omega})\in C_i$, for $i<l$. By the Carlson--Simpson theorem we get an element $(X^0_{n'})_{n'\in A\cup \omega}$ so that the set $A'=\{(Y_{n'})_{n'\in A\cup  \omega}\leq (X^0_{n'})_{n'\in A\cup \omega}\}$ is monochromatic. In other words there exists $k<l$ so that every element of $A'$ goes to $C'_k$. As a result the set $A=\{((X'_{n'})_{n'\in A})\cup (X'_{2n'}\cup X'_{2n'+1})_{n'\in \omega},(\min X'_{2n'+1})_{n'\in \omega})): (X'_{n'})_{n'\in A\cup \omega}\in A'\}$ is monochromatic as well. Namely every member of $A$ goes to $C_k$ for the same fixed $k<l$. Therefore $Q((X^0_{n'})_{n'\in A\cup \omega})$ satisfies the conclusions of our theorem. To see this notice that for every $((Y_{n'})_{n'\in A})\cup(Y_{2n'}\cup Y_{2n'+1})_{n'\in \omega},(\min Y_{2n'+1})_{n\in \omega}))\leq Q((X^0_{n'})_{n'\in A\cup \omega})$ is an element of $A$.

\end{proof}

 Next show that $F_{\omega, \omega}$ forms a topological Ramsey space. The reader at this point is assumed to be familiar with the theory of Ramsey spaces as introduced and developed by S. Todorcevic in \cite{To}.
   We have already introduced a topology on $F_{\omega,\omega}$, the one with basic open sets $[(t,i),(r,c)]$, where $(t,i)\in F_{L,K}$ and $(r,c)\in F_{\omega, \omega}$.  We say that a subset $\mathcal{X}$ of $F_{\omega,\omega}$ is \emph{Ramsey} if for every $[(t,i),(r,c)]\neq \emptyset$ there is a $(r',c')\in [(t,i),(r,c)]$ such that  either $[(t,i),(r',c')]\subset \mathcal{X}$ or $[(t,i),(r',c')]\subset \mathcal{X}^c$, and    $\mathcal{X}$  is \emph{Ramsey null} if for every $[(t,i),(r,c)]\neq \emptyset$, there is $(r',c')$ such that $[(t,i),(r',c')]\cap \mathcal{X}=\emptyset$. We are going to see that Ramsey subsets of $F_{\omega,\omega}$ are exactly those with the Baire property. Moreover we will show that meager sets are Ramsey null. In order to see the equivalence of those topological and Ramsey notions we use Theorem 5.4 from \cite{To}. To apply the general theory we need to define finite approximation of connections and we need to verify axioms $A.1$ to $A.4$ from \cite{To}.\\
     
    We define finite approximations  $$ u:F_{\omega,\omega} \times \omega \to \mathcal{A}F_{\omega,\omega}$$ to connections as follows: $$u((r,c),n)=u_n((r,c))=(r,c)[n].$$ In other words $u_n((r,c))$ is the initial segment $(r,c)[n]$ introduced in the first section. The image of $u_n$ is denoted by $({\mathcal{A}F}_{\omega, \omega})_n$ and the union $\cup_{n\in \omega} ({\mathcal{A}F}_{\omega, \omega})_n= {\mathcal{A}F}_{\omega, \omega}$ forms the set of all finite approximations of all elements of the space $F_{\omega, \omega}$.
 
  We pass now to state and verify the axioms A.1--A.4. The first three are immediate consequence of the definitions:\\
 
    $\boldsymbol{A.1.}$
    
     Let $(r,c), (r',c')\in F_{\omega, \omega}$.
     
    \begin{enumerate}
        \item{} $u_0((r,c))=\emptyset$ for all $(r,c)\in F_{\omega,\omega}$.\\
        \item{} $(r',c')\neq (r,c)$ implies $u_n((r,c))\neq u_n((r',c'))$ for some $n\in \omega$.\\
       \item{}  $u_n((r,c))=u_m((r',c'))$ implies $n=m$ and $u_k((r,c))=u_k((r',c'))$ for all $k<n$.
   \end{enumerate}

  $\boldsymbol{A.2.}$
  
   Given $(t',i')\in F_{L,K},(t,i)\in F_{N,M}$ we define $(t',i')\leq_{fin}(t,i)$ if they have the same length, i.e., $N=|(t,i)|=|(t',i')|=L$ and $(t',i')\leq (t,i)$. In other words $(t,',i')\leq_{fin}(t,i)$ iff $(t',i')$ is a reduct of $(t,i)$ as defined in section $1$, Definition $3$. 
   
  \begin{enumerate}
  
    \item{} For any $(t,i)\in F_{N,M}$ the set\\ $\{\, (t',i')\in F_{N,K}:K\leq M, (t',i')\leq_{fin}(t,i)\, \}$ is finite.\\
  \item{} For any $(r',c'), (r,c)\in F_{\omega, \omega}$, $(r',c')\leq (r,c)$ if and only if \\ $(\forall n)(\exists m) u_n((r',c'))\leq_{fin}u_m((r,c))$.\\
  \item{}  For all $(t,i),(t',i'):$\\
  $[(t',i')\preceq (t,i)\wedge (t,i)\leq_{fin}(t'',i'') \to \exists (\tilde{t},\tilde{i})\leq_{fin} (t'',i''):\, (t',i')\leq_{fin}(\tilde{t},\tilde{i})]$.\\
  \end{enumerate}
  
  $\boldsymbol{A.3}$
  
  Let $(t,i)\in F_{M,N}$, $(r,c),(r',c'), (r'',c'')\in F_{\omega, \omega}$.
  \begin{enumerate}
  \item{} If $[(t,i),(r,c)]\neq \emptyset $ then $[(t,i),(r',c')]\neq \emptyset$ for all $(r',c')\in [(t,i),(r,c)]$.\\
  \item{} $(r',c')\leq (r,c)$ and $[(t,i),(r',c')]\neq \emptyset$ imply that there is $(r'',c'')\in [(t,i),(r,c)]$ such that $\emptyset \neq [(t,i),(r'',c'')]\subseteq [(t,i),(r',c')]$.\\
  \end{enumerate}
  The last axiom A.4 is less obvious.
  
 $ \boldsymbol{A.4}$\\
 Let $(r,c)\in F_{\omega,\omega}$, $\mathcal{O}\subseteq (\mathcal{A}{F_{\omega,\omega}})_{n+1}$, $(t,i)\in F_{M,N}$, for some $M$, fixed $N=n$ and $[(t,i),(r,c)]\neq \emptyset$. There exists $(r',c')\in [(t,i),(r,c)]$ such that: 
 $$u_{n+1}[(t,i),(r',c')]\subseteq \mathcal{O} \text{ or } u_{n+1}[(t,i),(r',c')]\subseteq \mathcal{O}^{c},$$ 
 where  $u_{n+1}[(t,i),(r',c')]=\{\, (t',i')\in (r',c')_{n+1}: (t,i)\sqsubseteq (t',i')[n]\,\}$ and $(r,c)_{n+1}=\{\,(t,i): (t,i)=(r',c')[n+1], (r',c')\in [(r,c)] \, \}.$ We remind the reader here that given $(t,i)\in F_{M,N}$ and $(t',i')\in F_{M',N'}$, connections of finite length, with $M\leq M'$ and $N\leq N'$, by $(t,i)\sqsubseteq (t',i')$ we denote that $(t,i)=(t'\upharpoonright M,i'\upharpoonright N)$.
 
 \begin{proof}
 
 Consider the open set $[(t,i),(r,c)]\neq \emptyset$. By $A.3(1)$ and the definition of basic open sets of $F_{\omega,\omega}$, we can assume that $(t,i) \preceq (r,c)$.  
  Let $c:[(t,i),(r,c)]\to 2$ be a two coloring, defined by $$c(r',c')=\mathcal{X}_{\mathcal{O}}((r',c')[n+1]),$$ where $\mathcal{X}_{\mathcal{O}}$ is the characteristic function of the set $\mathcal{O}\subseteq  (\mathcal{A}F_{\omega,\omega})_{n+1}$. This is obviously a Borel coloring, with respect the $\tau_2$ topology.  Theorem $3$ applies, where $A$ an alphabet with $|A|=n$, to give us $(r_0,c_0)\in F_{\omega, \omega}$ so that $c$ is monochromatic on $[(t,i),(r_0,c_0)]$. For the basic open set $[(t,i),(r_0,c_0)]$ one has that either $$u_{n+1}[(t,i),(r_0,c_0)] \subseteq \mathcal{O}\text{ or }u_{n+1}[(t,i),(r_0,c_0)]\subseteq \mathcal{O}^{c}.$$
 Therefore $(r_0,c_0)$ satisfies the claim of axiom $A.4$.
 
  \end{proof}

Considering ${\mathcal{A}F}_{\omega, \omega}$ as a discrete space, the infinite power ${\mathcal{A}F}_{\omega, \omega}^\mathbb{N}$ gets its Tychonov product topology, a completely metrizable topology. We consider $F_{\omega, \omega}$ a subset of ${\mathcal{A}F}_{\omega, \omega}^\mathbb{N}$ via the identification $(r,c)\to (u_n(r,c))_{n\in \omega}$. Thus it is natural to call $F_{\omega, \omega}$ closed if in this identification it corresponds to a closed subset of ${\mathcal{A}F}_{\omega, \omega}^\mathbb{N}$. It is obvious that $F_{\omega, \omega}$ is closed.

Now we can state the Theorem $5.4$ from \cite{To} in the context of $F_{\omega, \omega}$.

\begin{mtheo}  The fact that $\langle F_{\omega,\omega}, u, \leq \rangle$ is closed and it satisfies axioms $A.1$, $A.2$, $A.3$ and $A.4$ implies that every property of Baire subset of $F_{\omega, \omega}$ is Ramsey and every meager subset is Ramsey null. Moreover the field of Ramsey subsets, that coincides with the field of Baire measurable subsets, is closed under the Suslin operation.

\end{mtheo}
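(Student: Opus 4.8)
The plan is to obtain the statement as a direct application of the Abstract Ellentuck Theorem, Theorem $5.4$ of \cite{To}, whose hypotheses are exactly the closedness of the space together with axioms $A.1$ through $A.4$. Thus the proof should consist of checking that every hypothesis of that theorem has been secured in the present setting and then quoting its conclusion verbatim; no new combinatorics is required beyond what has already been done.

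First I would record that the ambient space ${\mathcal{A}F}_{\omega,\omega}^{\mathbb{N}}$, carrying the Tychonov topology of the discrete space ${\mathcal{A}F}_{\omega,\omega}$, is completely metrizable, and that under the identification $(r,c)\mapsto (u_n((r,c)))_{n\in\omega}$ the set $F_{\omega,\omega}$ is a closed subset. The latter is immediate from axiom $A.1(3)$: a sequence $(a_n)_{n\in\omega}$ of finite approximations lies in the image of some connection precisely when each $a_n$ has length $n$ and extends $a_{n-1}$ in the sense of $\preceq$; this is a condition involving only finitely many coordinates at a time and hence cuts out a closed set.

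Next I would invoke the verifications already carried out. Axioms $A.1$, $A.2$ and $A.3$ were read off directly from the definitions of the approximation map $u$, the relation $\leq_{fin}$, and the basic open sets $[(t,i),(r,c)]$, while the substantive axiom $A.4$ was established in the preceding argument by transporting the given two-coloring to $F^A_{\omega,2}$ through the homeomorphism $\theta'$ and applying Theorem $3$. With closedness and $A.1$--$A.4$ in hand, Theorem $5.4$ of \cite{To} applies and yields all three assertions at once: every subset of $F_{\omega,\omega}$ with the Baire property is Ramsey, every meager subset is Ramsey null, and the family of Ramsey subsets is a field that coincides with the field of Baire-measurable sets and is closed under the Suslin operation.

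The only genuine obstacle in this scheme has already been overcome, namely the verification of $A.4$; once that axiom is available the result is purely a citation of the abstract theory. The one point still demanding a little care is to confirm that the notions of \emph{Ramsey} and \emph{Ramsey null} used in Theorem $5.4$ of \cite{To} agree with those defined here through the sets $[(t,i),(r,c)]$, and that the topology with respect to which the Baire property is taken is the exponential topology generated by these basic open sets rather than the product topology inherited from ${\mathcal{A}F}_{\omega,\omega}^{\mathbb{N}}$. This matching is exactly the content of the abstract framework of \cite{To} and requires only unwinding the definitions, so I would dispatch it by a short remark rather than a separate argument.
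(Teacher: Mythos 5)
Your proposal matches the paper exactly: the statement is not given a separate proof but is obtained by citing Theorem~5.4 of \cite{To} once the closedness of $F_{\omega,\omega}$ in ${\mathcal{A}F}_{\omega,\omega}^{\mathbb{N}}$ and axioms $A.1$--$A.4$ (with $A.4$ being the only substantive verification, done via the homeomorphism $\theta'$ and Theorem~3) have been established. Your additional remarks on matching the Ramsey/Ramsey-null notions and the relevant topology are sensible bookkeeping that the paper leaves implicit.
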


Any space that satisfies the conclusions of the above theorem is called a topological Ramsey space. 
That Ramsey and topological equivalence in any topological Ramsey space gives as a corollary a Ramsey theorem. In the case of our topological Ramsey space $\langle F_{\omega,\omega}, u, \leq \rangle$ is the following:

\begin{theorem}
Given a finite Suslin measurable coloring $g:[(t,i),(r',c')] \to l$, there exists a $(r,c)\in [(t,i),(r',c')]$ such that $[(t,i),(r,c)]$ is monochromatic.
\end{theorem}

  $\langle F_{\omega,\omega}, u, \leq \rangle$ forms a topological Ramsey space, so its field of Baire measurable subsets coincides with that of Ramsey and is closed under the Suslin operation. Therefore for any finite coloring, where each color is Suslin measurable, the assertion of the above theorem follows immediately. Notice that Theorem $4$ is a reformulation of Theorem $2$ for $(t,i)=( \emptyset,\emptyset)$.
  
   Now we prove the corresponding version of Theorem $2$ for $F_{\omega, K}$, where $K<\omega$. We introduce a topology on $F_{\omega,K}$ with basic open sets: $[(t,i),(r,c)]=\{ (s,j)\in F_{\omega,K} : (t,i)\sqsubseteq (s,j), (s,j)\leq (r,c)\}$, where $(t,i)\in F_{M,L}$, $M\in \omega$, $L\leq K$ and $(r,c)\in F_{\omega,\omega}$.

   \begin{theorem}\label{maintheorem} Let $l>0$ be a natural number. For each finite Suslin measurable $l$ coloring of $F_{\omega, K}$,  $g:F_{\omega,K}\to l$, there exists $(r,c) \in F_ {\omega, \omega}$ such that the set $$ F_{\omega,K}\cdot (r,c)=\{ \, (s,j)\cdot(r,c): (s,j)\in F_{\omega, K}\, \}$$ is monochromatic.
     \end{theorem}
  
   \begin{proof}
   Consider the connection $(s_0,j_0)\in F_{\omega, K}$ defined as follows: $s_0\upharpoonright K=id_K$ and $s_0\upharpoonright [K,\omega)=0$. The way $s_0$ is defined implies that $j_0\upharpoonright K=id_K$.
   Let $\pi:F_{\omega,\omega} \to F_{\omega,K}$ be defined by $\pi((r,c))=(s_0,j_0)\cdot (r,c)$.     Consider now the composition $g\circ \pi: F_{\omega,\omega}\to l$ which is also Suslin measurable. By Theorem $2$ there exists $(r',c')\in F_{\omega,\omega}$ such that $[(r',c')]$ is monochromatic with respect the above composition.

      Let $(r,c)=(r_1,c_1) \cdot (r',c')$, where $(r_1,c_1)$ has the property that for all $n\in \omega$ the set $r_1^{-1}(\{n\})$ is of infinite cardinality.   
   Notice that any $(s,j)\in  F_{\omega,K}\cdot (r,c)$ can be written as $\pi (r'',c'')$ for some $(r'',c'')\in [(r,c)]$. Therefore $(r,c)$ is such that $ F_{\omega,K}\cdot (r,c)$ monochromatic with respect the coloring $g$.
  \end{proof}

 \begin{center}
  \section{Baire measurability}
  \end{center}

  In this section we show that Theorem $5$ does extend to the realm of Baire measurable colorings relative to an appropriate topology on $F_{\omega,K}$, $K<\omega$.    
    
       Consider the completely metrizable topology on the space $F_{\omega,K}$ having as basic open sets, sets of the form: 
                  \begin{center} 
    $[(t,i)]=\{ \, (s,j)\in F_{\omega,K}: (t,i)\sqsubseteq (s,j) \,\}$,
                  \end{center}
                  
  where $(t,i)\in F_{L,K}$ for $K\leq L<\omega$. We remind the reader that by $(t,i)\sqsubseteq (s,j)$ we denote that $(t,i)=(s\upharpoonright L,j\upharpoonright K)$.\\

    \begin{theorem}
    Let $g:F_{\omega,K}\to l$ be a finite coloring that is Baire measurable relative to the metrizable topology defined just above. Then there exists an $(r,c)\in F_{\omega,\omega}$ such that the set $[(r,c)]_K=\{\, (s,j)\in F_{\omega,K}: (s,j)\leq (r,c)\,\}$ is $g$-monochromatic.
    \end{theorem}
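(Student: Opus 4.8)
The plan is to transfer the coloring to the space $F_{\omega,\omega}$, where the Ramsey topological structure is already available, along the projection $\pi\colon F_{\omega,\omega}\to F_{\omega,K}$, $\pi((r,c))=(s_0,j_0)\cdot(r,c)$, used in the proof of Theorem 6; here $s_0\upharpoonright K=id_K$, $s_0\upharpoonright[K,\omega)=0$ and $j_0\upharpoonright K=id_K$, so that $\pi((r,c))=(s_0\circ r,\,c\upharpoonright K)$ is the $K$-partition obtained from $(r,c)$ by merging every class of $r$ of index $\geq K$ into the class of index $0$. Given the Baire measurable coloring $g\colon F_{\omega,K}\to l$, I would form $g\circ\pi\colon F_{\omega,\omega}\to l$, show that it is Baire measurable for the metrizable topology of $F_{\omega,\omega}$, invoke the theorem asserting that $\langle F_{\omega,\omega},u,\leq\rangle$ is a Ramsey topological space to place its colour classes in the Suslin-closed field of Ramsey sets, and then run the argument of Theorems 5 and 6.

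The technical heart of the proof, and the step I expect to be the main obstacle, is the verification that $g\circ\pi$ is Baire measurable, which amounts to showing that $\pi^{-1}$ sends open sets to open sets and meager sets to meager sets. Continuity of $\pi$ is routine: for $N\geq K$ with $E_N$ large, the approximation $(r,c)[N]$ determines $r\upharpoonright n$ and $c\upharpoonright K$, hence determines the approximation $u'_n(\pi((r,c)))$, so $\pi^{-1}$ of an open set is open. The map $\pi$ is however \emph{not} open, since its image consists of those $(s,j)$ whose $0$-th class is infinite. What does hold, and suffices, is that $\pi$ sends each nonempty basic open set $V=\{(r,c):(r,c)[N]=(a,b)\}$ (with $(a,b)\in F_{E,N}$, $N\geq K$) onto a dense subset of the basic open set $B=[(s_0\circ a,\,b\upharpoonright K)]$ of $F_{\omega,K}$: indeed $\pi(V)$ contains every $(s,j)\in B$ whose $0$-th class is infinite, because such an $s$ can be pulled back to a rigid surjection $r$ onto $\omega$ extending $a$ with $s_0\circ r=s$ by spending the infinitely many positions where $s=0$ to introduce the new classes of index $\geq K$ one at a time, and these $(s,j)$ are dense in $B$. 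Consequently, if $M\subseteq F_{\omega,K}$ is nowhere dense and $V\subseteq \pi^{-1}(\overline{M})$ were a nonempty basic open set, then $\pi(V)\subseteq\overline{M}$, so that $B\subseteq\overline{\pi(V)}\subseteq\overline{M}$ exhibits the nonempty open set $B$ inside $\overline{M}$, a contradiction; hence $\pi^{-1}$ carries nowhere dense sets to nowhere dense sets and meager sets to meager sets. Combined with continuity this yields that $\pi^{-1}$ of a Baire measurable set is Baire measurable, so each $(g\circ\pi)^{-1}(\{m\})=\pi^{-1}(g^{-1}(\{m\}))$ is Baire measurable in $F_{\omega,\omega}$.

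With this in hand the conclusion follows as before. Since the colour classes of $g\circ\pi$ are Baire measurable, they lie in the field of Ramsey subsets of $F_{\omega,\omega}$, so the argument proving Theorem 5 (which uses only that the colour classes are Ramsey), applied colour by colour, produces $(r,c)\in F_{\omega,\omega}$ with $[(r,c)]_\omega$ monochromatic for $g\circ\pi$. Finally I would repeat the descent of Theorem 6: passing to $(r',c')=(r_1,c_1)\cdot(r,c)$ with $(r_1,c_1)$ chosen so that every fibre $r_1^{-1}(\{n\})$ is infinite guarantees that each $(s,j)\in[(r',c')]_K$ has infinite $0$-th class and is therefore of the form $\pi((r'',c''))$ for some $(r'',c'')\in[(r',c')]_\omega$; hence the constant value of $g\circ\pi$ on $[(r',c')]_\omega$ is the constant value of $g$ on $[(r',c')]_K$, and $[(r',c')]_K$ is $g$-monochromatic.

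In short, the only genuinely new point compared with Theorem 6 is the passage from Suslin measurable to Baire measurable colourings, and this is localized entirely in the claim that $\pi$ reflects category. I would therefore concentrate the write-up on the density computation for $\pi(V)$ and on the book-keeping ensuring that a rigid surjection onto $\omega$ with the prescribed $s_0$-image exists, since everything downstream is a verbatim reuse of the machinery already established.
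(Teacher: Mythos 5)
Your reduction along $\pi$ is carried out correctly as far as it goes: $\pi$ is continuous for the metrizable topologies, your density computation for $\pi(V)$ inside $B$ is right, and so $\pi^{-1}$ does carry meager sets to meager sets and Baire measurable sets to Baire measurable sets. The genuine gap is the very next step, where you place the (metrically) Baire measurable colour classes of $g\circ\pi$ ``in the field of Ramsey subsets of $F_{\omega,\omega}$.'' Nothing in the paper, and nothing in the abstract theory it invokes, gives this. The quoted Theorem 5.4 of Todorcevic yields Ramseyness for sets with the Baire property relative to the \emph{Ellentuck-type} topology generated by the sets $[(t,i),(r,c)]$, and Theorem 5 of the paper yields it for Suslin measurable sets; Baire measurability for the coarser metrizable topology is a strictly weaker hypothesis and does not imply either. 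Indeed the implication you need is false in general: take $(r_0,c_0)\in F_{\omega,\omega}$ with infinitely many non-singleton classes, so that the cube $[(r_0,c_0)]_\omega$ is metrically closed and nowhere dense; under AC this cube, being a relativized copy of the whole space, contains a non-Ramsey subset, and that subset is metrically meager, hence metrically Baire measurable, yet not Ramsey. (Restricting attention to sets of the special form $\pi^{-1}(Y)$ does not help: for such sets, Ramseyness is essentially equivalent to the statement being proved.)

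What is actually needed to pass from Suslin measurable to Baire measurable colourings is a fusion argument showing that a single cube can be threaded through a dense metric $G_\delta$, and this is precisely what the paper supplies and your proposal omits. The paper works directly in $F_{\omega,K}$: it first replaces $g$ by its restriction to a dense $G_\delta$ set $\mathcal{G}$ on which $g$ is continuous, then proves Lemma 4 (for each finite approximation $(t,i)$ there is a block sequence of partial surjections $f_p:D_p\to K$ such that any $(s,j)$ extending $(t,i)$ and infinitely many of the $f_p$ lies in $\mathcal{G}$), and then in Claim 6 recursively weaves all these block sequences into one connection $(r',c')$ with $[(r',c')]_K\subseteq\mathcal{G}$; only at that point does the already-proved Theorem 6 apply to the continuous restriction $g\upharpoonright[(r',c')]_K$. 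If you insist on transferring to $F_{\omega,\omega}$ first, you face the same obstacle one level up: after finding a dense $G_\delta$ of $F_{\omega,\omega}$ on which $g\circ\pi$ is continuous, you must still produce a cube $[(r,c)]_\omega$ inside it, which is the same fusion construction. So your category-reflection computation relocates the difficulty rather than resolving it, and the proposal as written is missing its essential ingredient.
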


    \begin{proof}
    The coloring $g$ is Baire measurable and $F_{\omega,K}$ is completely metrizable and second countable, so there is a dense $G_{\delta}$ subset $\mathcal{G}$ of $F_{\omega,K}$ where the coloring is actually continuous. 
       We call a sequence $(D_p)_{p\in \omega}$ of finite subsets of $\mathbb{N}$ a block sequence, if for any two $p_0$, $p_1$ with $p_0<p_1$ we have $\max D_{p_0}< \min D_{p_1}$.\\
    To continue we need the following:
      \begin{lemma}
   Given a dense $G_\delta$ subset $\mathcal{G}$ of $F_{\omega,K}$ and $(t,i) \in F_{L,K}$, there is an infinite block sequence $(D_p)_{p\in \omega}$ of finite subsets of $\mathbb{N}$ and for each $p$ a surjective mapping $f_p:D_p\to K$ such that for every $(s,j)\in F_{\omega, K}$ if $(t,i)=(s\upharpoonright L,j\upharpoonright K)$ and $s$ extends infinitely many of the $f_p$, then  $(s,j)\in \mathcal{G}$.
    \end{lemma}

    \begin{proof}
    $\mathcal{G}$ is a dense $G_\delta$ subset of $F_{\omega,K}$, so its complement is a countable union of closed nowhere dense sets namely $\mathcal{G}^{c}=\cup_{n\in \omega}N_n$. Start with the open set $[(t,i)]$ and $N_0$, there is an extension $t_0$ of $t$ such that $[(t_0,i)]\cap N_0=\emptyset$. Set ${d_0}= dom(t_0\setminus t)$ and $g_0:d_0\to K$ defined by $g_0=t_0\upharpoonright d_0$. We remark here that $i$ is fixed so no matter how we extend $t$ has not impact on $i$.  We  could think of a rigid surjection defined on a finite interval, in a set theorytic way, as a set of ordered pairs, so given two rigid surjections defined on two disjoint intervals we can consider their union, namely the rigid surjection defined on both intervals and is equal to each of its components when restricted to one of them.

     Let $\{\,{t^0_h}:h\in m\, \}$ where $t^0_0=t_0$, be an enumeration of all possible extensions of $t$ with length equal to $|t_0|$. Consider the open set $[(t^0_1,i)]$ and let $t_1$ be an extension of the rigid surjection $t^0_1$ such that $[(t_1,i)]\cap N_0=\emptyset$. Set now ${d_1}=dom(t_1\setminus t^0_1)$ and $g_1:d_1\to K$ defined by $g_1=t_1\upharpoonright d_1$. Then consider the open set $[(t^0_2 \cup g_1,i)]$ and let $t'_2$ be an extension of the rigid surjection $t^0_2 \cup g_1$ such that $[(t'_2,i)]\cap N_0=\emptyset$. Once more set  $d_2=dom(t'_2\setminus t^0_2 \cup g_1)$ and $g_2:d_2\to K$ defined by $g_2=t'_2\upharpoonright d_2$. After $m$-steps we have defined $({d_i})_{i\in m}$ and similarly $(g_i)_{i\in m}$. Set now $D_0=d_1\cup \dots \cup d_{m-1}$ and $f_0=g_1\cup \dots \cup g_{m-1}$. They have the property that for all $t^0_h$ with $h< m$ $$ [(t^0_h\cup f_0,i)]\cap N_0=\emptyset.$$
    
    Consider now the open set $[(t\cup g_0\cup  f_0,i)]$ and let $t_2$ be an extension of the rigid surjection $ t\cup g_0\cup f_0$ such that $[(t_2,i)]\cap N_1=\emptyset$. Observe that $[(t_2,i)]\cap N_0=\emptyset$ as well.
   
     Let $\{\, {t^1_h}:h\in  n\,\}$, where $t^1_0=t_2$, be an enumeration of all possible extensions of $t$ with length equal to $|t_2|$. As above start off with $t^1_1$ and consider the open set $[( t^1_1 ,i)]$. Let $g_{m}: d_{m}\to K$ be a such that $[(t^1_1 \cup g_{m}),i)]\cap (N_0\cup N_1)=\emptyset$. After $n$ steps we have created $(d_h)_{h\in [m,m+n-1]}$ and their corresponding mappings $(g_h)$. Set $D_1=d_{m}\cup \dots \cup d_{(m+n-1)}$ and $f_1=g_{m}\cup \dots \cup g_{(m+n-1)}$. Then $D_1$ and $f_1$ have the property that for all $h\in n\,[(t^1_h\cup f_1,i)]\cap (N_0\cup N_1)=\emptyset$.

     We proceed in that manner to get $(D_p)_{p\in \omega}$ and $f_p:D_p\to K$ their corresponding surjections. Now suppose that $(s,j)\in F_{\omega,K}$ is such that $(t,i)=(s\upharpoonright L,j\upharpoonright K)$, $s$ extends infinitely many of $f_p$ and $(s,j)\in \mathcal{G}^{c}$. Then $(s,j)\in N_n$ for some $n$. But $s$ extends in particular some $f_m$ where $m \geq n$ which implies that $(s,j)$ does not belong to the finite union $N_0\cup \dots \cup N_m$ a contradiction.
      \end{proof}
      
 We continue now the proof of Theorem $6$. Let $(b_l)_{l\in \mathbb{N}}$ be an enumeration of all $(t,i)\in F_{L,K}$, $L \in \omega$, i.e. all initial segments of all elements of $F_{\omega,K}$ where the domains of their increasing injections are all equal to $K$. For each $l$, let $(f^l_p)_{p\in \mathbb{N}}$ be the sequence of block mappings  given by Lemma $1$ when applied to $\mathcal{G}$ and $b_l$. We will create $(r',c')\in F_{\omega,\omega}$ with the property that $[(r',c')]_{K}\subseteq \mathcal{G}$.\\
 
 \begin{claim} There exists $(r',c')\in F_{\omega,\omega}$ such that $[(r',c')]_K\subseteq \mathcal{G}$. 
 
 \end{claim}
 
 From this claim, we see that $g \upharpoonright [(r',c')]_K$ is continuous so we apply Theorem $5$ to get the desired $(r,c)$ that satisfies the conclusions of Theorem $6$.\\
 
 It rests to prove Claim $1$.
 \begin{proof}
  We build $(r',c')\in F_{\omega,\omega}$ recursively by deciding its restrictions on $(r',c')[n]$ for $n\in \omega$. We aim to create $(r',c')$ so that for every $n\in \omega$, $| r'^{-1}(\{n\})|=\omega$.
  Start with a $(t_0,i_0)\in F_{M,K}$ for some $M\in \omega$. We extend $t_0$ by $t_1$ with $dom(t_1)=  M+K+1$. The extension is defined as follows: $t_1\upharpoonright M=t_0 \upharpoonright M$, $t_1(M+k)=k$ for $k<K$ and $t_1(M+K)=K$. Similarly we extend $i_0$ by $i_1$ as follows: $i_1\upharpoonright K=i_0 \upharpoonright K$, $i_1(K)=M+K$, and we set $E_{K+1}=M+K+1$. Notice that $(t_1,i_1)\in F_{E_{K+1}, K+1}$.

   Next consider the finite set: $$A_{K+1}= \{ (t_0,i_0)\}\cup \{ \, (b_q)_{q\in m-1} :b_q\in F_{E_{K+1},K}, b_q\leq_{fin} (t_1,i_1) \}.$$ Recall that, as we remarked in the first section, rigid surjections are nothing more than equivalence relations on their domain. For notational convenience, let $b_0=(t_0,i_0)$, so $(b_q)_{q\in m}$.
   Start off with $b_0$, choose a finite partial mapping $f_0:D_0\to K$ associated with $b_0$ such that $D_0$ lies above $E_{K+1}$ i.e. $\min D_0 > E_{K+1}$. Set $C_0=\max(D_0)+1$ and define the extension $(t_1^0,i_1^0)$ of $(t_1 ,i_1)$ as follows: $f_0^{-1}(m)$ gets in the same equivalent class, by $t_1^0$, as the minimal point of $ b_0^{-1}(m)$, for $m\in K$.
  In the case that $(C_0\setminus D_0)\neq \emptyset$ we require that $t_1^0 \upharpoonright (C_0\setminus D_0)\subseteq K+1$ respecting only the conditions of $t_1^0$ being a rigid surjection. Observe that $i_1^0=i_1\upharpoonright K+1$ is already defined. As a result $(t_1^0,i_1^0)\in F_{C_0, K+1}$.
  Consider $b_1$ now, and repeat the above step to get $f_1:D_1\to K$, with $D_1$ lying above of $C_0$. Set $C_1=\max(D_1)+1$. In this case $(t_2^0,i_2^0)$ extends $(t_1 ,i_1)$ as follows: $t_2^0\upharpoonright  C_0 =t_1^0$ and $t_2^0$ gets $f_1^{-1}(m)$ in the same class as the minimal element of $b_1^{-1}(m)$. Once again we define $t_2^0$ on $C_1\setminus D_1$ arbitrarily respecting only the conditions of $t_2^0$ being a connection.
  After $m$ steps we finally define $C_{m-1}$ and the extension of $(t_1,i_1)$ to $(t_{m-1}^0,i_{m-1}^0)$. Observe that $i_1=i_{m-1}^0$ and $dom(t_{m-1}^0)=C_{m-1}$. We set $E_{K+2}= C_{m-1}+K+2$ and define $(t_2,i_2)$ an extension of $(t_{m-1}^0,i_{m-1}^0)$  as follows: $t_2\upharpoonright C_{m-1}=t_{m-1}^0\upharpoonright C_{m-1}$, $t_2(C_{m-1}+ k)=k$, for $k\in K+1$, and $t_2(C_{m-1}+K+1)=K+1$. Let $i_2(K+1)=C_{m-1}+K+1$ and $i_2\upharpoonright K+1=i_{m-1}^0\upharpoonright K+1$. Notice that $(t_1,i_1)=(t_2\upharpoonright C_{m-1}+K+1,i_2\upharpoonright K+1)$. We denote that by $(t_1,i_1)\sqsubseteq(t_2,i_2)$. Observe also that $(t_2,i_2)\in F_{E_{K+2}, K+2}$.
  Next  consider the finite set:  $$A_{K+2}=A_{K+1}\cup \{  (b_q)_{q\in w} : b_q\in F_{E_{K+2},K},  b_q \leq_{fin} (t_2,i_2)  \}.$$
  
  Proceed as above. Let $(r',c')[n]=(t_1,i_1)[n]$, for $n= K-1$ and $(r',c')[n]= (t_{n-K+2},i_{n-K+2})$, for $n\geq K$. We have to demonstrate that $(r',c')$ has the desired property.

  Consider an arbitrary $(s',j')\in [(r',c')]_K=\{\, (s,j)\in F_{\omega,K}: (s,j)\leq (r',c')\,\}$. Let $ (s'\upharpoonright M',j'\upharpoonright K)=(t',i')\in F_{M',K}$. The rigid surjection $s'$ extends $t': M' \to K$ and infinitely many of the $f_p^l$ with $b_l=(t',i')$. To see this notice that $b_l$ is contained in every set $A_{K+N}$ with $M' \leq E_{K+N}$. As a result $(s',j')\in \mathcal{G}$. 
    \end{proof}
  \end{proof}
  
  Therefore our Theorem $5$ holds in the realm of Baire measurable colorings as well.

\section{ Acknowledgement}

I must thank the referee for his careful reading, his patience and his valuable suggestions.


\begin{thebibliography}{5}
  \bibitem[Ca--Si]{Ca--Si}   T.J. Carlson, S.G. Simpson, A Dual from of Ramsey's Theorem, Adv. Math., 53 (1984), 265-290.
          \bibitem[Gr--Ro]{Gr--Ro} R.L.Graham, B.L. Rothschild, Ramsey's theorem for n-parameter sets. Trans. Amer. Math. Soc. 159 1971 257-292.
            \bibitem[Ke]{Ke}  A.S. Kechris, Classical Descriptive Set Theory, Springer--Verlag.
              \bibitem[Ra]{Ra}  F. P. Ramsey, On a problem of Formal Logic, Proc. London Math Society Ser. 230 (1929), pp. 264-286.
     \bibitem[So]{So}   S. Solecki Abstract approach to finite Ramsey theory and a self-dual Ramsey theorem. Adv. Math. 248 (2013), 1156--1198.
                     \bibitem[To]{To} S. Todorcevic, Introduction to Ramsey Spaces, Annals of Mathematics Studies, No.174, Princeton Univ. Press, 2010.
              \end{thebibliography}
     \end{document}